\newtheorem{thm}{Theorem}[section]
\theoremstyle{definition}
\theoremstyle{remark}
\newcommand{\grad}{\textrm{grad}}
\title{On the geometry of four dimensional Riemannian manifold with a circulant metric and a circulant affinor structure}
\author{Dimitar Razpopov}
\date{}
\begin{document}

\maketitle

\begin{abstract}
We consider a $4$-dimensional Riemannian manifold $M$ with a
metric $g$ and an affinor structure $q$. We note the local
coordinates of $g$ and $q$ are circulant matrices. Their first
orders are $(A, B, C, B)$, $A, B, C \in FM$ and $(0, 1, 0, 0)$,
respectively.

Let $\nabla$ be the connection of $g$. Further, let $\mu_{1},
\mu_{2},\mu_{3}, \mu_{4}, \mu_{5}, \mu_{6}$ be the sectional
curvatures of $2$-sections $\{x, qx\}$, $\{x, q^{2}x\}$,
$\{q^{3}x, x\}$, $\{qx, q^{2}x\}$, $\{qx, q^{3}x\}$, $\{q^{2}x,
q^{3}x\}$ for arbitrary vector $x\in T_{p}M$, $p\in M$ . Then we
have that $q^{4}=E;$\quad $g(qx, qy)=g(x,y)$,\quad $x,\ y\in \chi
M$.

The main results of the present paper are

 1) \quad There exist a $q$-base in $T_{p}M$, $p\in M$.

 2) \quad if
$\nabla q=0$, then $\mu_{1}= \mu_{3}=\mu_{4}=\mu_{6}$, $\mu_{2}=
\mu_{5}=0$,

\end{abstract}

\Small{\textbf{Mathematics Subject Classification (2010)}: 53C15,
53B20}

\Small{\textbf{Keywords}: Riemannian metric, affinor structure,
sectional curvatures}
\thispagestyle{empty}
\section{Introduction}
The main purpose of the present paper is to continue the
considerations on the differential geometry of Riemannian
manifolds with a circulant metric $g$, and a circulant affinor
structure $q$ \cite{1}, \cite{2}, \cite{3}, \cite{4}.

Let $M$ be a Riemannian manifold with a metric $g$ and affinor
structure $q$, such that \cite{4}:
\begin{equation}\label{f1}
    g_{ij}=\begin{pmatrix}
      A & B & C & B \\
      B & A & B & C\\
      C & B & A & B\\
      B & C & B & A\\
    \end{pmatrix},\quad
    q_{i}^{.j}=\begin{pmatrix}
      0 & 1 & 0 & 0\\
      0 & 0 & 1 & 0\\
      0 & 0 & 0 & 1\\
      1 & 0 & 0 & 0\\
    \end{pmatrix},
\end{equation}
 where $det g_{ij}=(A-C)^{2}((A+C)^{2}-4B^{2})$, and $A=A(p),\ B=B(p),
\ C=C(p),\ p=(x^{1}, x^{2}, x^{3}, x^{4}) \in D \subset R^{4}$ are
 smooth functions in the local coordinate system $\{x^{1}, x^{2},
 x^{3}, x^{4}\}$. We suppose
 \begin{equation}\label{f2}
    0 < B < C < A.
\end{equation}
By virtue of \cite{2} we easily get that $g$ is positively defined
metric.

We know from \cite{4} that
\begin{equation}\label{f3}
    q^{4}=id, \qquad q\neq \pm id, \quad q^{2}\neq \pm id
\end{equation}
\begin{equation}\label{f4}
    g(qx, qy)=g(x, y),\qquad x, \ y \in \chi M.
\end{equation}
Let $\nabla$ be the Levi-Chivita connection of $g$ and $R$ be the
curvature of $\nabla$, i.e $R(x,
y)z=\nabla_{x}\nabla_{y}z-\nabla_{[x,y]}z$. We consider the
associated with $R$ tensor field $R$ of type $(0, 4)$, defined by
the condition
\begin{equation*}
    R(x, y, z, u)=g(R(x, y)z,u), \qquad x, y, z, u\in \chi M.
    \end{equation*}
Then we have
\begin{equation}\label{f5}
    \nabla q=0 \Leftrightarrow \grad A=\grad C.q^{2}, \grad B=\grad
    C.(q+q^{3})
\end{equation}
\begin{equation}\label{f6}
    \nabla q=0\Rightarrow R(x, y, qz, qu)= R(x, y, z, u), x, y, z,
    u\in\chi M.
\end{equation}
Further in our considerations we suppose that $M$ owns all
properties (\ref{f1})--(\ref{f6}).

\section{$q$-bases}
\begin{thm}
Let $M$ be a $4$-dimensional Riemannian manifold with a metric $g$
and an affinor structure $q$ with local coordinates (\ref{f1}). If
$p$ is a point in $M$ and $x$ is an arbitrary vector in $T_{p}M$
then there exist a $q$-base in $T_{p}M$.
\end{thm}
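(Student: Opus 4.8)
The plan is to read a $q$-base as a basis of $T_pM$ of the form $\{x, qx, q^2x, q^3x\}$, and to prove the statement by producing a vector $x$ for which these four vectors are linearly independent. First I would work in the coordinate frame in which $q$ has the matrix (\ref{f1}) and write an arbitrary vector as $x=(x^1,x^2,x^3,x^4)$. Applying $q$ repeatedly yields the cyclic shifts
\begin{equation*}
qx=(x^2,x^3,x^4,x^1),\quad q^2x=(x^3,x^4,x^1,x^2),\quad q^3x=(x^4,x^1,x^2,x^3),
\end{equation*}
so the matrix $S$ whose columns are $x,qx,q^2x,q^3x$ is (anti-)circulant with entries built from $(x^1,x^2,x^3,x^4)$. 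The four vectors form a $q$-base exactly when $\det S\neq 0$.

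The core step is to compute $\det S$ and show it is not identically zero. Using the circulant structure, the determinant factors over the fourth roots of unity: with $f(t)=x^1+x^2t+x^3t^2+x^4t^3$ one has $\det S=\pm f(1)f(-1)f(i)f(-i)$, and grouping the two conjugate factors $f(i)f(-i)=(x^1-x^3)^2+(x^2-x^4)^2$ gives
\begin{equation*}
\det S=\pm(x^1+x^2+x^3+x^4)(x^1-x^2+x^3-x^4)\big[(x^1-x^3)^2+(x^2-x^4)^2\big].
\end{equation*}
(The same expression follows from a cofactor expansion, but the circulant factorisation keeps it transparent.) This polynomial in the components of $x$ is not the zero polynomial; for instance $x=(1,0,0,0)$ gives $\det S=\pm1\neq 0$. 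Hence there exist vectors $x\in T_pM$ with $\det S\neq 0$, and for any such $x$ the set $\{x,qx,q^2x,q^3x\}$ is a basis, proving the theorem.

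I would then record, for later use, that every basis produced this way is automatically a $q$-base in the metric sense: by the isometry property (\ref{f4}) one has $g(q^ix,q^jx)=g(x,q^{\,j-i}x)$, so $g(q^ix,q^ix)$ is independent of $i$ and $g(x,q^kx)$ depends only on $k \bmod 4$, with $g(x,q^3x)=g(qx,x)=g(x,qx)$. Consequently the Gram matrix of $\{x,qx,q^2x,q^3x\}$ has exactly the circulant pattern $(\alpha,\beta,\gamma,\beta)$ of (\ref{f1}), which is convenient when the sectional curvatures $\mu_1,\dots,\mu_6$ are evaluated.

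The main obstacle is the determinant computation together with the fact that linear independence can genuinely fail, so the phrase \emph{arbitrary vector} must be understood as \emph{away from the zero locus of} $\det S$: the union of the hyperplanes $x^1+x^2+x^3+x^4=0$ and $x^1-x^2+x^3-x^4=0$ together with the $2$-plane $x^1=x^3,\ x^2=x^4$, which are precisely the eigendirections of $q$. Verifying that this locus is a proper subset, so that admissible $x$ exist, is what makes the existence assertion rigorous, and the circulant factorisation above is the device that makes the verification immediate.
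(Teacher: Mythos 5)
Your proposal is correct and rests on the same basic idea as the paper's proof --- reduce the claim to the nonvanishing of the determinant of the coordinate matrix of $\{x,qx,q^{2}x,q^{3}x\}$ --- but you actually carry the computation out, and this matters, because the paper's execution is flawed. The paper assumes only $qx\neq x$ and $q^{2}x\neq x$ and then asserts that ``using (\ref{f1}) we easily see'' the determinant is nonzero; that is false as stated. Your circulant factorisation
\begin{equation*}
\det S=\pm\,(x^{1}+x^{2}+x^{3}+x^{4})(x^{1}-x^{2}+x^{3}-x^{4})\bigl[(x^{1}-x^{3})^{2}+(x^{2}-x^{4})^{2}\bigr]
\end{equation*}
exhibits the true degeneracy locus, which is strictly larger than the set excluded by the paper: for $x=(1,-1,0,0)$ one has $qx\neq x$ and $q^{2}x\neq x$, yet $x+qx+q^{2}x+q^{3}x=0$, so the paper's hypotheses do not guarantee independence. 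The nonvanishing condition you isolate is exactly the hypothesis the paper itself imposes (without justification) at the beginning of Section 3, so your argument simultaneously repairs the proof of this theorem and supplies the missing justification for that later assumption; your test vector $(1,0,0,0)$ settles the existence claim, which is the correct reading of ``arbitrary'' here. Two minor points: your parenthetical that the bad locus consists of ``the eigendirections of $q$'' is loose --- the $2$-plane $x^{1}=x^{3}$, $x^{2}=x^{4}$ is the sum of the $\pm1$-eigenspaces, but the two hyperplanes are the loci where the component of $x$ along the eigenvectors $(1,1,1,1)$, respectively $(1,-1,1,-1)$, vanishes, and are not themselves eigenspaces. Your closing Gram-matrix observation, that $g(q^{i}x,q^{j}x)$ depends only on $j-i$ modulo $4$ and reproduces the circulant pattern of (\ref{f1}), is correct and is indeed what the curvature computations in Section 3 implicitly rely on.
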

\begin{proof}
Let $p\in M$, $x=(x^{1}, x^{2}, x^{3}, x^{4})\in T_{p}M$, and
$qx\neq x$, $q^{2}x\neq x$. Then using (\ref{f1}) we easily see
that the determinant of the coordinates of $x, qx, q^{2}x$ and
$q^{3}x$ is not zero. So, $\{x, qx, q^{2}x, q^{3}x\}$ is a
$q$-base in $T_{p}M$.
\end{proof}
\begin{thm}
Let $M$ be a $4$-dimensional Riemannian manifold with a metric $g$
and an affinor structure $q$ with local coordinates (\ref{f1}). If
$p$ is a point in $M$ and $x$ is an arbitrary vector in $T_{p}M$
then there exist an orthonormal $q$-base in $T_{p}M$.
\end{thm}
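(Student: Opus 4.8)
The plan is to exhibit a single vector $x\in T_pM$ for which the $q$-base $\{x, qx, q^{2}x, q^{3}x\}$ furnished by the previous theorem is orthonormal, and to construct it explicitly. First I would reduce orthonormality to three scalar equations. Because $g$ is $q$-invariant by (\ref{f4}) and $q^{4}=\mathrm{id}$ by (\ref{f3}), one has $g(q^{i}x, q^{j}x)=g(x, q^{j-i}x)$, so the Gram matrix of $\{x, qx, q^{2}x, q^{3}x\}$ is symmetric and circulant, with first row $\bigl(g(x,x),\, g(x,qx),\, g(x,q^{2}x),\, g(x,qx)\bigr)$. Hence the base is orthonormal if and only if
\begin{equation*}
 g(x,x)=1,\qquad g(x,qx)=0,\qquad g(x,q^{2}x)=0,
\end{equation*}
all remaining inner products vanishing automatically.

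Next I would diagonalize the situation by the $q$-invariant splitting of $T_pM$. Since $q$ acts on coordinates as the cyclic shift (\ref{f1}), $T_pM$ decomposes into the $(+1)$-eigenline spanned by $f_{0}=(1,1,1,1)$, the $(-1)$-eigenline spanned by $f_{2}=(1,-1,1,-1)$, and the $2$-plane $\Pi$ spanned by $u=(1,0,-1,0)$ and $w=(0,1,0,-1)$, on which $q$ acts by $qu=-w$, $qw=u$. Because $g$ is $q$-invariant, eigenvectors whose $q$-eigenvalues have product $\ne 1$ are $g$-orthogonal; a direct check with (\ref{f1}) gives in addition $g(u,w)=0$ together with
\begin{equation*}
 g(f_{0},f_{0})=4(A+2B+C),\quad g(f_{2},f_{2})=4(A-2B+C),\quad g(u,u)=g(w,w)=2(A-C),
\end{equation*}
all of which are strictly positive by (\ref{f2}).

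Writing $x=\alpha f_{0}+\beta f_{2}+\gamma u+\delta w$ and using the action of $q$ on each summand, I would compute
\begin{equation*}
 g(x,qx)=\alpha^{2}g(f_{0},f_{0})-\beta^{2}g(f_{2},f_{2}),\qquad
 g(x,q^{2}x)=\alpha^{2}g(f_{0},f_{0})+\beta^{2}g(f_{2},f_{2})-(\gamma^{2}+\delta^{2})g(u,u),
\end{equation*}
together with the analogous expression for $g(x,x)$. The three conditions then collapse to $\alpha^{2}g(f_{0},f_{0})=\beta^{2}g(f_{2},f_{2})=\tfrac14$ and $(\gamma^{2}+\delta^{2})g(u,u)=\tfrac12$. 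Since the three norms are positive, these are solvable, e.g. $\delta=0$ and
\begin{equation*}
 \alpha=\frac{1}{2\sqrt{g(f_{0},f_{0})}},\qquad \beta=\frac{1}{2\sqrt{g(f_{2},f_{2})}},\qquad \gamma=\frac{1}{\sqrt{2\,g(u,u)}},
\end{equation*}
which produces the desired $x$ and hence an orthonormal $q$-base.

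The main obstacle is the orthogonality bookkeeping inside the plane $\Pi$: because $q$ rotates $u$ and $w$ into one another, the cross terms in $g(x,qx)$ and $g(x,q^{2}x)$ must be shown to cancel, and one must confirm that the three relevant norms are positive, which is precisely where the hypothesis $0<B<C<A$ of (\ref{f2}) enters.
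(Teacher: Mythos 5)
Your proposal is correct, but it takes a genuinely different route from the paper. The paper's proof is a direct coordinate construction: it sets $x^{4}=0$, prescribes $x^{2}$ explicitly, and determines $x^{1},x^{3}$ as the roots of the quadratic system (\ref{f7}), checking via a discriminant computation that real roots exist under (\ref{f2}), and then verifies $g(x,x)=1$, $g(x,qx)=g(x,q^{2}x)=0$ by direct calculation. You instead exploit the structure of $q$ as a cyclic shift: decomposing $T_{p}M$ into the $(+1)$-eigenline of $f_{0}=(1,1,1,1)$, the $(-1)$-eigenline of $f_{2}=(1,-1,1,-1)$, and the rotation plane $\Pi$, and using $q$-invariance of $g$ to make the Gram matrix block-diagonal in these coordinates. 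Both proofs share the same first reduction (by (\ref{f3}) and (\ref{f4}) the Gram matrix of $\{x,qx,q^{2}x,q^{3}x\}$ is symmetric circulant, so only the three conditions $g(x,x)=1$, $g(x,qx)=0$, $g(x,q^{2}x)=0$ matter), and your computations check out: $g(f_{0},f_{0})=4(A+2B+C)$, $g(f_{2},f_{2})=4(A-2B+C)$, $g(u,u)=g(w,w)=2(A-C)$, $g(u,w)=0$, the cross terms in $g(x,qx)$ cancel because $q|_{\Pi}$ is a $g$-rotation, and the three norms are positive by (\ref{f2}) (for $A-2B+C$ use $B<C$ and $B<A$). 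What your approach buys is transparency and robustness: solvability is immediate from positivity of three eigen-norms, with no radicals, no discriminant estimate, and in fact a parametrization of essentially all solutions (the paper's discriminant expression, as printed, is garbled, which illustrates the fragility of the brute-force route). What the paper's approach buys is a completely explicit vector in the original coordinate system, with no appeal to the spectral decomposition of the shift.
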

\begin{proof}
Let $p\in M$, $x=(x^{1}, x^{2}, x^{3}, x^{4})\in T_{p}M$. We
suppose that $$x^{4}=0,\qquad
x^{2}=\dfrac{\sqrt{A+B-2C}-\sqrt{A+B+2C}}{2\sqrt{A+B-2C}\sqrt{A+B+2C}},$$
and $x^{1}$ and $x^{3}$ satisfy the following system
\begin{align}\label{f7}\nonumber
    x^{1}+x^{3}&=\dfrac{\sqrt{A+B-2C}-\sqrt{A+B+2C}}{2\sqrt{A+B-2C}\sqrt{A+B+2C}},
    \\
    x^{1}x^{3}&=\dfrac{2B^{2}-C^{2}-AC}{2(A-C)\sqrt{A+B-2C}\sqrt{A+B+2C}}.
\end{align}
The solutions of the system (\ref{f7}) are solutions of the square
equations with discriminant
$$D=(x^{1}+x^{3})^{2}-4x^{1}x^{3}=\dfrac{A^{2}+C^{2}+2AC+(A-C)\sqrt{(A+C)^{2}-4B^{2}}-4B^{2}}{2{A-C}((A+C)^{2}-C)-4B^{2}}.$$
We have $D > 0$ because of (\ref{f2}).

Now we can verify directly that $g(x, x)=1$, $g(x, qx)=0$, $g(x,
q^{2}x)=0$ and consequently, on due to (\ref{f1}), we see that
$g(x, q^{3}x)=g(qx, q^{2}x)=g(qx, q^{3}x)=g(q^{2}x, q^{3}x)=0$.
So, $\{x, qx, q^{2}x, q^{3}x\}$ is an orthonormal $q$-base.
\end{proof}
\textbf{An example.} Let $p\in M$ and $\{x, qx, q^{2}x, q^{3}x\}$
be a $q$-base in $T_{p}M$. Let $L,\ N,\ S,\ T$ be four points in
$T_{p}M$ such that $PL=x$, $PN=qx$, $PS=q^{2}x$, $PT=q^{3}x$. Let
$\alpha$ and $\beta$ be the corresponding angels between $x$ and
$qx$, and $x$ and $q^{2}x$, respectively. Using
(\ref{f1})--(\ref{f4}) we get $
\begin{Vmatrix}
  LN \\
\end{Vmatrix}=\begin{Vmatrix}
  LT \\
\end{Vmatrix}=\begin{Vmatrix}
  ST \\
\end{Vmatrix}=2\begin{Vmatrix}
  x \\
\end{Vmatrix}^{2}(1-\cos \alpha)$,

$
\begin{Vmatrix}
  LS \\
\end{Vmatrix}=\begin{Vmatrix}
  NT \\
\end{Vmatrix}=2\begin{Vmatrix}
  x \\
\end{Vmatrix}^{2}(1-\cos \beta)$. We see that $LNST$ is a pyramid
with six sides which are all congruent isoseles triangles. The
angles of such a side of the pyramid we note by $\gamma$ and
$\delta$, so $2\gamma +\delta=\pi$. They satisfy
\begin{equation*}
    \cos \gamma =\dfrac{1-\cos \beta}{2\sqrt{1-\cos \alpha}\sqrt{1-\cos \beta}},
    \quad
    cos \delta =\dfrac{1-2\cos \alpha+\cos \beta}{2(1-cos \alpha)}.
    \end{equation*}
    Evidently, in the case when $\{x, qx, q^{2}x, q^{3}x\}$ is an
    orthonormal $q$-base then the every side of the pyramid is an
    equilateral triangle.

\section{curvatures}

Let $p$ be a point in $M$ and $x$, $y$ be two linearly independent
vectors on $T_{p}M$. It is known the value
\begin{equation}\label{3.3}
    \mu(E;p)=\frac{R(x, y, x, y)}{g(x, x)g(y, y)-g^{2}(x, y)}
\end{equation}
is the sectional curvature of $2$-section $E=\{x, y\}$.

For a vector $x=(x^{1}, x^{2}, x^{3}, x^{4})$ from $T_{p}M$ we
suppose
$((x^{1}-x^{3})^{2}+(x^{2}-x^{4})^{2})(x^{1}-x^{2}+x^{3}-x^{4})(x^{1}+x^{2}+x^{3}+x^{4})\neq
0.$ By using (\ref{f3}) we get that the vectors $x, qx, q^{2}x,
q^{3}x$ are linearly independent.
\begin{thm}
Let $M$ be the Riemannian manifold with a metric $g$ and a
parallel structure $q$, defined by (\ref{f1}) and (\ref{f2}),
respectively. Let $p$ be a point in $M$ and $x$ be an arbitrary
vector in $T_{p}M$. Then the sectional curvatures of $2$-sections
$E_{1}=\{x, qx\}$, $E_{3}=\{q^{3}x, x\}$, $E_{4}=\{q^{2}x, qx\}$,
$E_{6}=\{q^{2}x, q^{3}x\}$ are equal among them, and the sectional
curvatures of $2$-sections $E_{2}=\{x, q^{2}x\}$ $E_{5}=\{q^{3}x,
qx\}$ are vanish.
\end{thm}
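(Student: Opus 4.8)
The plan is to exploit the two key structural facts established earlier: the parallelism condition $\nabla q=0$, which by \eqref{f6} gives the curvature invariance $R(x,y,qz,qu)=R(x,y,z,u)$, together with the compatibility \eqref{f4}, $g(qx,qy)=g(x,y)$. These two identities are exactly what is needed to transport the sectional-curvature formula \eqref{3.3} from one of the six $2$-sections to another, since both the numerator $R(\cdot,\cdot,\cdot,\cdot)$ and the denominator $g(\cdot,\cdot)g(\cdot,\cdot)-g^2(\cdot,\cdot)$ are built from $R$ and $g$ evaluated on powers of $q$ applied to $x$.

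\textbf{First} I would record how the denominator behaves. For any section $\{q^a x, q^b x\}$ the Gram determinant $g(q^ax,q^ax)g(q^bx,q^bx)-g^2(q^ax,q^bx)$ depends, by \eqref{f4} applied repeatedly and using $q^4=\mathrm{id}$ from \eqref{f3}, only on the difference $b-a \pmod 4$. Thus the four sections $E_1=\{x,qx\}$, $E_4=\{q^2x,qx\}$, $E_6=\{q^2x,q^3x\}$ and $E_3=\{q^3x,x\}$ all have index-differences $\pm1 \pmod 4$ and hence share a common denominator, while $E_2=\{x,q^2x\}$ and $E_5=\{q^3x,qx\}$ have difference $2 \pmod 4$ and share their own common denominator.

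\textbf{Next}, for the numerators, I would apply \eqref{f6} in the form $R(q^ax,q^bx,q^cx,q^dx)=R(q^{a-1}x,q^{b-1}x,q^{c-1}x,q^{d-1}x)$, which lets me shift all four slots simultaneously by one power of $q$ without changing the value of $R$. Applying this repeatedly, together with the symmetries of the curvature tensor $R(x,y,x,y)$, I would show that $R(x,qx,x,qx)$, $R(q^2x,qx,q^2x,qx)$, $R(q^2x,q^3x,q^2x,q^3x)$ and $R(q^3x,x,q^3x,x)$ all coincide, which together with the equality of denominators yields $\mu_1=\mu_3=\mu_4=\mu_6$. For the two vanishing sections, the invariance under the shift $q^2$ combined with the antisymmetry $R(x,y,z,u)=-R(y,x,z,u)$ forces $R(x,q^2x,x,q^2x)=R(q^2x,x,q^2x,x)=-R(x,q^2x,x,q^2x)$ after relabeling via \eqref{f6}, so the numerator is its own negative and hence zero, giving $\mu_2=\mu_5=0$.

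\textbf{The main obstacle} will be bookkeeping the curvature symmetries correctly when combining the $q$-shift invariance \eqref{f6} with the standard algebraic identities of $R$, since a careless application can produce a spurious sign or identify sections whose index-differences are genuinely distinct. In particular the vanishing of $\mu_2$ and $\mu_5$ hinges on pairing the $q^2$-shift (which sends $\{x,q^2x\}$ to $\{q^2x,x\}$, i.e. swaps the two vectors) against the antisymmetry of $R$ in its first pair of arguments; I would verify this pairing explicitly rather than by analogy, as it is the one place where the argument is not purely a transport of a nonzero quantity but a genuine cancellation.
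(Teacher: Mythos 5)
Your overall strategy matches the paper's — combine the parallelism consequence \eqref{f6} with the metric compatibility \eqref{f4} and the algebraic symmetries of $R$ — and your treatment of the nonvanishing sections is sound: the simultaneous shift $R(q^ax,q^bx,q^cx,q^dx)=R(q^{a-1}x,q^{b-1}x,q^{c-1}x,q^{d-1}x)$ does follow from \eqref{f6} once you also invoke the pair symmetry $R(x,y,z,u)=R(z,u,x,y)$ (which you should state explicitly, since \eqref{f6} as given acts only on the last two arguments), and it correctly identifies the four numerators and, together with \eqref{f4} and $q^4=\mathrm{id}$ from \eqref{f3}, the four Gram determinants for $E_1,E_3,E_4,E_6$.

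However, the vanishing argument — the very step you flagged as the crux — contains a sign error that kills it. You claim $R(x,q^2x,x,q^2x)=R(q^2x,x,q^2x,x)=-R(x,q^2x,x,q^2x)$. The first equality (full shift by $q^2$) is correct, but the second is false: passing from $R(q^2x,x,q^2x,x)$ back to $R(x,q^2x,x,q^2x)$ requires swapping the first pair \emph{and} the second pair, and the two minus signs cancel, so $R(q^2x,x,q^2x,x)=+R(x,q^2x,x,q^2x)$. (Indeed $R(y,x,y,x)=R(x,y,x,y)$ holds for arbitrary $x,y$; if your identity were right, every sectional curvature would vanish.) So as written you have not proved $\mu_2=\mu_5=0$. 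The fix is to use \eqref{f6} exactly as stated, shifting only the \emph{last two} slots by $q^2$:
\begin{equation*}
R(x,q^2x,x,q^2x)=R(x,q^2x,q^2x,q^4x)=R(x,q^2x,q^2x,x)=-R(x,q^2x,x,q^2x),
\end{equation*}
where now only one antisymmetry (in the last pair) is used, producing the genuine cancellation; this is precisely the paper's substitution a), and the same partial shift disposes of $R(qx,q^3x,qx,q^3x)$ for $E_5$.
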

\begin{proof}
From (\ref{f6}) we find
\begin{equation}\label{3.6}
    R(x, y, z, u)=R(x, y, qz, qu)=R(x, y, q^{2}z, q^{2}u)=R(x, y, q^{3}z, q^{3}u).
    \end{equation}
    In (\ref{3.6}) we get the following substitutions:

    a) $z=x$, $y=u=qx$; $x\sim qx$, $z=x$, $y=u=q^{2}x$; $z=x$,
    $y=u=q^{3}x$.
    \begin{equation}\label{dop1}
    R(x, qx, x, qx)=R(x, q^{3}x, x, q^{3}x),\qquad R(x, q^{2}x, x,
    q^{2}x)=0.
    \end{equation}

 b) $z=x$, $y=qx$, $u=q^{2}x$; $z=qx$, $y=u=q^{2}x$; $z=x$,
    $y=q^{3}x$, $u=q^{2}x$; $z=qx$, $y=u=q^{3}x$.
    \begin{equation*}
           R(x, qx, x, q^{2}x)=R(x, q^{2}x, qx, q^{2}x)= R(x, q^{3}x, q^{2}x, x)=R(x, q^{3}x, qx, q^{3}x)=0
    \end{equation*}

c) $z=x$, $y=qx$, $u=q^{3}x$; $z=qx$, $y=u=q^{2}x$;  $z=qx$,
    $y=qx$, $u=q^{2}x$; $z=q^{2}x$, $y=qx$, $u=q^{3}x$.
    \begin{equation*}
           -R(x, qx, x, q^{3}x)=R(x, qx, qx, q^{2}x)= R(x, qx, q^{2}x, q^{3}x)=R(x, qx, x, qx)
    \end{equation*}

    d) $z=qx$, $y=qx$, $u=q^{3}x$; $z=qx$, $y=q^{2}x$, $u=q^{3}x$;  $z=q^{2}x$,
    $y=q^{2}x$, $u=q^{3}x$; $z=q^{2}x$, $y=qx$, $u=q^{3}x$.
    \begin{equation*}
           R(x, q^{2}x, qx, q^{3}x)=R(x, q^{2}x, q^{2}x, q^{3}x)= R(qx, q^{2}x, qx,
           q^{3}x)=0
    \end{equation*}

    e) $z=qx$, $y=q^{3}x$, $u=q^{2}x$; $z=q^{2}x$, $y=u=q^{3}x$;  $x\sim qx$, $z=q^{2}x$,
    $y=q^{2}x$, $u=q^{3}x$; $x\sim qx$, $z=q^{2}x$, $y=u=q^{3}x$.
    \begin{equation*}
           -R(x, q^{3}x, qx, q^{2}x)=-R(x, q^{3}x, q^{2}x, q^{3}x)=R(qx, q^{2}x, q^{2}x, q^{3}x)= R(x, qx, x,
           qx)
    \end{equation*}
    and
    \begin{equation*}
           R(qx, q^{3}x, q^{2}x, q^{3}x)=0
    \end{equation*}

    f) $x\sim qx$, $z=qx$, $y=q^{3}x$, $u=q^{3}x$; $x\sim qx$, $z=qx$, $y=q^{2}x$, $u=q^{2}x$; $x\sim q^{2}x$, $z=q^{2}x$,
    $y=q^{3}x$, $u=q^{3}x$.
    \begin{equation}\label{dop7}
           R(qx, q^{3}x, qx, q^{3}x)=0
    \end{equation}
     \begin{equation}\label{dop8}
           R(qx, q^{2}x, qx, q^{2}x)=R(q^{2}x, q^{3}x, q^{2}x, q^{3}x)= R(x, qx, x,
           qx)
    \end{equation}
 From (\ref{f4}), (\ref{3.3}), (\ref{dop1})--(\ref{dop8}) we get
    \begin{equation*}
    \mu(E_{1};p)= \mu(E_{3};p)= \mu(E_{4};p)=\mu(E_{6};p)=\frac{R(x, qx, x, qx)}{g^{2}(x, x)-g^{2}(x, qx)}.
\end{equation*}
\begin{equation*}
    \mu(E_{2};p)= \mu(E_{5};p)= 0.
\end{equation*}
By virtue of the linear independence of $x$ and $qx$ we have
$$g^{2}(x,x)-g^{2}(x, qx)=g^{2}(x,x)(1-cos\varphi)\neq 0, $$  where
$\varphi$ is the angle between $x$ and $qx$.
\end{proof}

\author{Dimitar Razpopov\\ Department of Mathematics and Physics\\ Agricultural University of Plovdiv\\
Bulgaria 4000\\
e-mail:drazpopov@qustyle.bg}
\end{document}